\RequirePackage{fix-cm}

\documentclass[smallextended,referee,envcountsect]{svjour3}
\smartqed

\usepackage{amssymb,amsfonts,amstext,amsmath}
\usepackage{mathptmx}
\usepackage{xcolor}
\usepackage{cite}
\usepackage{leading}
\usepackage{graphicx,geometry}
\newtheorem{thm}{Theorem}[section]

\newcommand{\disp}{\displaystyle}


\def\Om{{\Omega}}

\def\Om{{\Omega}}

\def\jnt{\disp\int}
\def\jjnt{\jnt\!\!\!\jnt}



\journalname{Journal of Optimization Theory and Applications}


\begin{document}

\title{Hierarchical Control for the Wave Equation with a Moving Boundary\thanks{Communicated by Roland  Glowinski}}

 \author{Isa\'{\i}as Pereira  de Jesus}
          \institute{ Isa\'{\i}as Pereira  de Jesus ({\bf Corresponding Author})   \at DM, Universidade Federal do Piau\'{\i}, Teresina, PI 64049-550, Brazil.\\
              \email{isaias@ufpi.edu.br}
         }
\date{Received: date / Accepted: date}

\maketitle

\begin{abstract} This paper  addresses the study of the  hierarchical control for the one-dimensional
wave equation in intervals with a moving boundary. This equation models the motion of a string where an endpoint is fixed and the other one is moving. When the
speed of the moving endpoint is less than the characteristic speed, the controllability of this equation is established. We assume that we can act on the
dynamic of the system by a hierarchy of controls. According to the formulation
given by H. von Stackelberg (Marktform und Gleichgewicht, Springer, Berlin, 1934), there are local controls called followers and
global controls called leaders. In fact, one considers situations where there
are two cost (objective) functions. One possible way is to cut the control into two
parts, one being thought of as ``the leader" and the other one as ``the follower".
This situation is studied in the paper, with one of the cost functions being of the
controllability type. We present the following results: the existence
and uniqueness of Nash equilibrium, the approximate controllability  with respect
to the leader control, and the optimality system for the leader control.

\keywords{Hierarchical control \and Stackelberg strategy \and Approximate controllability  \and Optimality system}

\subclass{ 35Q10 \and 35B37 \and 35B40}

\end{abstract}

\section{Introduction}

In classical control theory, we usually find a state equation or system and one
control with the mission of achieving a  predetermined goal. Frequently (but not
always), the goal is to minimize a cost functional in a prescribed family of
admissible controls.

A more interesting situation arises when several (in general, conflictive or
contradictory) objectives are considered. This may happen, for example, if the
cost function is the sum of several terms and it is not clear how to average. It is also expected to have more than one control acting on the equation. In
these cases we are led to consider multi-objective control  problems.

In contrast to the mono-objective case and depending on the characteristics of the problem, there are many strategies for the choice of
good controls. Moreover, these strategies can be cooperative (when the controls mutually
cooperate in order to achieve some goals) or noncooperative.

There exist several equilibrium concepts for multi-objective problems, with origin
in game theory, mainly motivated by economics. Each of them determines a
strategy. Let us mention the noncooperative optimization strategy proposed
by Nash \cite{N}, the Pareto cooperative strategy \cite{P}, and the Stackelberg
hierarchical-cooperative strategy \cite{S}.

In the context of game theory,  Stackelberg's strategy is normally applied
within games where some players are in a better position than others. The
dominant players are called the leaders and the sub-dominant players the
followers. One situation where this concept is very convincing is when the
players choose their strategies one after another and the player who does the
first move has an advantage. This interpretation however is not so convincing in
continuous time where players choose their strategies at each time, but one
could think of the  situation  where one player has a larger information structure. From a
mathematical point of view,  Stackelberg's  concept is successful because it leads to
results.

In the context of the control of PDEs, a relevant question is whether one is able
to steer the system to a desired state (exactly or approximately) by applying
controls that correspond to one of these strategies.

This paper was inspired by ideas of J.-L. Lions
\cite{L1}, where we  investigate a  similar question of hierarchical control
employing the Stackelberg strategy in the case of time dependent domains.

We list below some related works on this subject up to date.

\textbf{$\bullet$}  The papers by Lions  \cite {L10, L12}, where the author gives some results
concerning Pareto and Stackelberg strategies, respectively.

\textbf{$\bullet$} The paper by  D\'iaz and Lions \cite {DL}, where the approximate
controllability of a system is established following a Stackelberg-Nash
strategy and the extension in D\'iaz \cite{D1}, that provides a
characterization of the solution by means of Fenchel-Rockafellar duality
theory.

\textbf{$\bullet$} The papers \cite {RA1, RA2}, where Glowinski, Ramos and Periaux   analyze the
Nash equilibrium for constraints given by linear parabolic and Burger's equations from the mathematical and numerical viewpoints.

\textbf{$\bullet$} The Stackelberg-Nash strategy for the Stokes systems has been studied
 by Gonz\'alez, Lopes and Medar in \cite {GO}.

\textbf{$\bullet$} In Limaco, Clark and Medeiros \cite {LI}, the authors present the
Stackelberg-Nash equilibrium in the context of linear heat equation in non-cylindrical domains.

\textbf{$\bullet$} The paper by Araruna, Fern{\'a}ndez-Cara and Santos \cite{AR}, where the authors study  the Stackelberg-Nash exact
controllability for linear and semilinear parabolic equations.

\textbf{$\bullet$} The paper by Ramos   and Roubicek  \cite{RR},  where the existence of a Nash equilibrium
is proved for a nonlinear distributed parameter predator-prey system and a
conceptual approximation algorithm is proposed and analyzed.

In this paper, we present the following results: the existence and uniqueness of
Nash equilibrium, the approximate controllability with respect to the leader
control, and the optimality system for the leader control.

The remainder of the paper is organized as follows. In Section \ref{sec2}, we present the
problem. Section 3 is devoted to establish the existence and uniqueness of Nash equilibrium. In Section \ref{sec4}, we study the approximate controllability
with respect to the leader control. In Section \ref{sec5}, we present the optimality system for the leader control. Finally,
in the Section \ref{sec6} we present the conclusions.

\section{Problem Formulation}\label{sec2}
As in \cite{Cui2}, given  $ T > 0 $, we consider the non-cylindrical domain defined
by
$$
\disp \widehat{Q}:= \left\{ (x,t) \in \mathbb{R}^2;\; 0 < x <  \alpha _k(t),\;  \; t \in  ]0,T[ \right\},
$$
where
$$
\disp \alpha_k(t):= 1 + kt, \;\;\;\;\;\; \; 0 < k < 1.
$$

Its lateral boundary  is defined by $\disp \widehat{\Sigma}:=\widehat{\Sigma}_0
\cup \widehat{\Sigma}_0^*$, with
$$\disp \widehat{\Sigma}_{0}:= \{(0,\,t);\;   t \in ]0,T[ \} \;\;\;\;\mbox{ and } \;\;\;\; \widehat{\Sigma}_{0}^* =\widehat{\Sigma} \backslash \widehat{\Sigma}_0:= \{(\alpha_k(t),t);\;   t \in  ]0,T[ \}.$$
We also denote by $\Om_t$ and $\Om_0$ the intervals $\disp  ]0, \alpha_k(t)[$
and $\disp ]0, 1[$, respectively.

Consider the following  wave equation in the non-cylindrical domain
$\widehat{Q}$:
\begin{equation} \label{eq1.3}
\begin{array}{l}
\displaystyle u'' - u_{xx} = 0 \ \ \mbox{ in } \ \ \widehat{Q},\\[11pt]
\disp u(x,t) = \left\{
\begin{array}{l}
\widetilde{w} \ \ \mbox{ on } \ \ \widehat{\Sigma}_{0},\\[11pt]
0 \ \ \mbox{ on } \ \ \widehat{\Sigma}_0^*,
\end{array}
\right.\\[11pt]
\disp u(x,0) = u_0(x), \;\; u'(x,0) = u_1(x) \ \mbox{ in }\; \Om_0,
\end{array}
\end{equation}
where $u$ is the state variable, $\widetilde{w}$ is the control variable and $( u_0(x),
u_1(x)) \in L^2(0, 1) \times H^{-1}(0, 1)$. By $\disp u'=u'(x,t)$ we represent the
derivative $\disp \frac{\partial u}{\partial t}$ and by $\disp u_{xx}=u_{xx}(x,t)$ the second order partial derivative $\disp \frac{\partial^2 u}{\partial x^2}$. Equation \eqref{eq1.3} models the motion of a string with a fixed endpoint and a
moving one. The constant $k$ is called the {\it speed of the moving endpoint.}

In spite of a vast literature on the controllability problems of the wave equation in cylindrical domains, there are only
a few works dealing with non-cylindrical case. We refer to \cite {Ar, Bar, Cui1, Cui,
Je, Mi1, Mi} for some known results in this direction.

In this article, motivated by the arguments contained in the work of J.-L. Lions
\cite{L1}, we investigate a  similar question of hierarchical control for equation
(\ref{eq1.3}), employing the Stackelberg strategy in the case of time
 dependent domains.

Following the work of J.-L. Lions \cite{L1}, we divide $\disp \widehat{\Sigma}_0$
into two disjoint  parts
\begin{equation}\label{decomp0}
\disp \widehat{\Sigma}_0=\disp \widehat{\Sigma}_1 \cup \disp \widehat{\Sigma}_2,
\end{equation}
 and consider
\begin{equation} \label{decomp}
\disp \widetilde{w}=\{\widetilde{w}_1, \widetilde{w}_2\}, \;\; \widetilde{w}_i:=\mbox{control function in } \; L^2(\widehat{\Sigma}_i), \;i=1,2.
\end{equation}

Thus, we observe  that the system \eqref{eq1.3} can be rewritten as follows:
\begin{equation} \label{eq1.3.1}
\begin{array}{l}
\displaystyle u''- u_{xx} = 0 \ \ \mbox{ in } \ \ \widehat{Q},\\[5pt]
\disp u(x,t) = \left\{
\begin{array}{l}
\widetilde{w}_1 \ \ \mbox{ on } \ \ \widehat{\Sigma}_{1},\\
\widetilde{w}_2 \ \ \mbox{ on } \ \ \widehat{\Sigma}_{2},\\
0 \ \ \mbox{ on } \ \ \widehat{\Sigma}\backslash \widehat{\Sigma}_0,
\end{array}
\right.\\[5pt]
\disp u(x,0) = u_0(x), \;\; u'(x,0) = u_1(x) \ \mbox{ in }\; \Om_0.
\end{array}
\end{equation}
In the decomposition \eqref{decomp0}, \eqref{decomp} we establish a hierarchy.
We think of $\widetilde{w}_1$ as being the  ``main" control, the  leader, and we
think of $\widetilde{w}_2$ as the  follower,  in Stackelberg terminology.

Associated with the solution $u=u(x,t)$ of \eqref{eq1.3.1}, we will consider the
(secondary) functional
\begin{equation}\label{sfn}
\disp \widetilde{J}_2(\widetilde{w}_1, \widetilde{w}_2):=
\displaystyle\frac{1}{2} \displaystyle\jjnt_{\widehat{Q}} \left(u(\widetilde{w}_1, \widetilde{w}_2)-\widetilde{u}_2\right)^2 dx dt +
\displaystyle\frac{\widetilde{\sigma}}{2} \int_{\widehat{\Sigma}_2} \widetilde{w}_2^2\;d\widehat{\Sigma},
\end{equation}
and the (main) functional
\begin{equation}\label{mfn}
\disp \widetilde{J}(\widetilde{w}_1):=\frac{1}{2}\int_{\widehat{\Sigma}_1} \widetilde{w}_1^2\;d\widehat{\Sigma},
\end{equation}
where $\widetilde{\sigma}>0$ is a constant and $\widetilde{u}_2$ is a given
function in $L^2(\widehat{Q}).$

\begin{remark}\label{bdfnc} \textrm{From the regularity and uniqueness of the solution  of \eqref{eq1.3.1} (see Remark $4.4$) the cost functionals $\disp \widetilde{J}_2$ and $\disp \widetilde{J}$ are well defined.}
\end{remark}
The control problem that we will consider is as follows: the follower $\disp
\widetilde{w}_2$ assume that  the leader $\disp \widetilde{w}_1$ has made  a
choice. Then, it tries  to find an equilibrium of the cost  $\widetilde{J}_2$ , that is,
it  looks for  a control $\disp \widetilde{w}_2=\mathfrak{F}(\widetilde{w}_1)$
(depending on $\disp \widetilde{w}_1$), satisfying:
\begin{equation}\label{n1.1}
\disp \widetilde{J}_2(\widetilde{w}_1, \widetilde{w}_2) \leq \widetilde{J}_2(\widetilde{w}_1,  \widehat{w}_2), \;\;\; \forall\; \widehat{w}_2  \in L^2(\widehat{\Sigma}_2).
\end{equation}

The control $ \widetilde{w}_2 $, solution of \eqref{n1.1}, is called {\it Nash
equilibrium} for the cost $\widetilde{J}_2 $ and it depends on $ \widetilde{w}_1$
(cf. Aubin \cite{A}).

\begin{remark}\label{r2} \textrm{In another way, if the leader $ \widetilde{w}_1$ makes a choice, then the follower $ \widetilde{w}_2$ makes also a choice, depending on  $\widetilde{w}_1$, which minimizes the cost $\widetilde{J}_2$, that is,
\begin{equation}\label{son}
\disp \widetilde{J}_2(\widetilde{w}_1, \widetilde{w}_2)= \inf_{\widehat{w}_2 \in L^2(\widehat{\Sigma}_2)} \widetilde{J}_2(\widetilde{w}_1, \widehat{w}_2).
\end{equation}
This is equivalent to \eqref{n1.1}. This process is called  Stackelberg-Nash
strategy; see D\'iaz and Lions \cite{DL}.}
\end{remark}

After this, we consider the state $\disp u\left(\widetilde{w}_1,
\mathfrak{F}(\widetilde{w}_1)\right)$ given by the solution of
\begin{equation} \label{eq1.3.1F}
\begin{array}{l}
\displaystyle u''- u_{xx} = 0 \ \ \mbox{ in } \ \ \widehat{Q},\\[3pt]
\disp u(x,t) = \left\{
\begin{array}{l}
\widetilde{w}_1 \ \ \mbox{ on } \ \ \widehat{\Sigma}_{1},\\
\mathfrak{F}(\widetilde{w}_1) \ \ \mbox{ on } \ \ \widehat{\Sigma}_{2},\\
0 \ \ \mbox{ on } \ \ \widehat{\Sigma}\backslash \widehat{\Sigma}_0,
\end{array}
\right.\\[3pt]
\disp u(x,0) = u_0(x), \;\; u'(x,0) = u_1(x) \ \mbox{ in }\; \Om_0.
\end{array}
\end{equation}

We will look for any optimal control $\disp \widetilde{w}_1$ such that
\begin{equation}\label{ocn}
\disp \widetilde{J}(\widetilde{w}_1, \mathfrak{F}(\widetilde{w}_1))= \inf_{\overline{w}_1 \in L^2(\widehat{\Sigma}_1)} \widetilde{J}(\overline{w}_1, \mathfrak{F}(\widetilde{w}_1)),
\end{equation}
subject to the following restriction of the approximate controllability type
\begin{equation}\label{apcn}
\begin{array}{l}
\disp\left( u(x, T; \widetilde{w}_1, \mathfrak{F}(\widetilde{w}_1)), u'(x, T; \widetilde{w}_1, \mathfrak{F}(\widetilde{w}_1))\right)\in B_{L^2(\Omega_t)}(u^0,\rho_0) \times B_{H^{-1}(\Omega_t)}(u^1,\rho_1),
\end{array}
\end{equation}
where $\disp B_X(C, r)$ denotes the ball in $X$ with centre $C$ and radius $r$.

To explain this optimal problem, we are going to consider the following
sub-problems:

\textbf{$\bullet$ Problem 1} Fixed any leader control $\widetilde{w}_1$,  find the
follower control $\disp \widetilde{w}_2=\mathfrak{F}(\widetilde{w}_1)$ (depending
on $\disp \widetilde{w}_1$) and the associated state $u$, solution of
\eqref{eq1.3.1} satisfying the condition \eqref{son} (Nash equilibrium)  related to
$\widetilde{J}_2$, defined in \eqref{sfn}.

\textbf{$\bullet$ Problem 2} Assuming that the existence of the Nash equilibrium
$\disp \widetilde{w}_2$ was proved, then when $\widetilde{w}_1$ varies in
$L^2(\widehat{\Sigma}_1)$, prove that the solutions $\disp\left(u(x, t;
\widetilde{w}_1, \widetilde{w}_2), u'(x, t; \widetilde{w}_1, \widetilde{w}_2)\right)$ of
the state equation \eqref{eq1.3.1}, evaluated at $t = T$, that is, $\disp\left(u(x, T;
\widetilde{w}_1, \widetilde{w}_2), u'(x, T; \widetilde{w}_1, \widetilde{w}_2)\right)$,
generate a dense subset of $\disp L^2(\Omega_t) \times H^{-1}(\Omega_t)$.

\begin{remark}\label{r1} \textrm{By the linearity of system \eqref{eq1.3.1F}, without loss of generality we may assume that $u_0=0=u_1$.}
\end{remark}
Following the work of J.-L. Lions \cite{L1}, we divide $\disp {\Sigma}_0$ into two
disjoint parts
\begin{equation}\label{decomp0.1}
\disp {\Sigma}_0=\disp {\Sigma}_1 \cup \disp {\Sigma}_2,
\end{equation}
and consider
\begin{equation} \label{decomp.2}
\disp w = \{w_1,w_2\}, \;\; {w}_i:=\mbox{control function in } \; L^2({\Sigma}_i), \;i=1,2.
\end{equation}
We can also write
\begin{equation} \label{decomp 2.A}
\disp w= w _1+ w_2, \; \mbox{ with } \; \disp {\Sigma}_0=\disp {\Sigma}_1 = \disp {\Sigma}_2.
\end{equation}

Note that when $\disp (x,t)$ varies in $\disp \widehat{Q}$ the point $\disp (y,t)$,
with $\disp y=\frac{x}{\alpha_k(t)}$, varies in $\disp Q=\Om \times ]0,T[,$ where
$\Om := ]0,1[$. Then, the application $$\disp \zeta:\widehat{Q} \to Q,
\;\;\;\zeta(x,t)=(y,t)$$ is of class $\disp C^2$ and the inverse $\disp \zeta^{-1}$ is
also of class $\disp C^2$. Therefore the change of variables $\disp u(x,t)=v(y,t)$,
transforms the initial-boundary value problem \eqref{eq1.3.1} into the equivalent
system
\begin{equation} \label{eq1.14}
\begin{array}{l}
\disp v'' + Lv = 0  \ \ \mbox{ in } \ \ Q,\\ [13pt]
\disp v(y,t) = \left\{
\begin{array}{l}
w_1 \ \ \mbox{on} \ \ \Sigma_1,\\
w_2 \ \ \mbox{on} \ \ \Sigma_2,\\
0 \ \ \mbox{on} \ \ \Sigma\backslash\Sigma_0,
\end{array}
\right. \\[5pt]
\disp v(y,0) = v_0(y), \; v'(y,0) = v_1(y), \;\; y \in \Om,
\end{array}
\end{equation}
where
\begin{equation*}
\begin{array}{lll}
\disp Lv = - \Big[\frac{\beta_k(y,t)}{\alpha_k(t)}v_{y}\Big]_{y} + \frac{\gamma_k(y)}{\alpha_k(t)}v'_{y}\;\;, \;\; \;
\disp \beta_{k}(y,t) = \frac{1  - k^2y^2}{\alpha_{k}(t)},\;\;\;
\disp \gamma_{k}(y) = -2ky,\\[10pt]
\disp v_{0}(y) = u_{0}(x), \;\;\;
\disp v_{1}(y) = u_{1}(x) + kyu_{x}(0), \;\;\;
\disp \Sigma= {\Sigma}_0 \cup \Sigma^*_0, \\[10pt]
\disp {\Sigma}_0 = \{(0,t) : 0 < t < T \}, \;\;\;
\disp {\Sigma}^*_0 = \{(1,t) : 0 < t < T \}, \;\;\;
\disp \disp {\Sigma}_0 = \Sigma_1  \cup \Sigma_2.
\end{array}
\end{equation*}

We consider the coefficients of the operator $L$  satisfying  the following
conditions:
\begin{itemize}
\item[(H1)] $\beta_{k}(y,t)  \in C^{1}(\overline{Q});$
\item[(H2)] $\gamma_{k}(y)  \in  W^{1,\infty}(\Om).$
\end{itemize}

In this way, it is enough to investigate the control problem for the equivalent
problem \eqref{eq1.14}.

$\bullet$ \textbf{ Cost functionals in the cylinder $Q$.}  From the diffeomorphism
$\disp \zeta,$ which transforms $\widehat{Q}$ into $Q$, we transform the cost
functionals $\disp \widetilde{J}_2, \disp \widetilde{J}$ into the cost functionals
$J_2, J$ defined by
\begin{equation} \label{eq3.9}
\disp J_2(w_1,w_2):= \frac{1}{2}\,\int_{0}^{T}\int_{\Omega}\alpha_{k}(t)[v(w_1,w_2) - v_2(y,t)]^2dy\,dt + \frac{\sigma}{2}\,\int_{\Sigma_2}w_{2}^{2}\,d\Sigma
\end{equation}
and
\begin{equation} \label{eq3.7}
\disp J(w_1):= \frac{1}{2}\,\int_{\Sigma_1}w_1^{2}\,d\Sigma,
\end{equation}
where $\sigma > 0$ is a constant and $v_2(y,t)$ is a given function in $L^2(\Om
\times (0,T)).$
\begin{remark}\label{rsol} \textrm{Using similar technique as  in \cite{Mi}, we can prove the  following:
For each $v_0 \in L^2(\Om)$, \linebreak $v_1 \in H^{-1}(\Om)$ and $\disp w_i \in
L^2(\widehat{\Sigma}_i), \;i=1,2,$ there exists exactly one  solution $v$ to
\eqref{eq1.14} in the sense of a transposition, with  $\disp v \in C\big( [0,T]
;L^2(\Om)) \cap C^{1}\big( [0,T] ; H^{-1}(\Om))$. Thus, in particular, the cost
functionals $\disp {J}_2$ and $\disp {J}$ are well defined.
Using the diffeomorphism  $\disp \zeta^{-1}(y,t)=(x,t)$, from $Q$ onto
$\widehat{Q}$, we obtain a unique global weak solution $\disp u$ to the
problem \eqref{eq1.3.1} with the regularity $\disp u \in C\big( [0,T] ; L^2(\Om_t))
\cap C^{1}\big( [0,T] ; H^{-1}(\Om_t))$.
}
\end{remark}

Associated with the functionals $\disp J_2$ and $\disp J$ defined above, we will
consider the following  sub-problems:

\textbf{$\bullet$ Problem 3} Fixed any leader control ${w}_1$,  find the  follower
control $\disp {w}_2$ (depending on $\disp {w}_1$) and the associated state $v$
solution of \eqref{eq1.14} satisfying (Nash equilibrium)
 \begin{equation}\label{soncil}
\disp {J}_2({w}_1, {w}_2)= \inf_{\widehat{w}_2 \in L^2({\Sigma}_2)} {J}_2({w}_1, \widehat{w}_2),
\end{equation}
related to ${J}_2$ defined in \eqref{eq3.9}.

\textbf{$\bullet$ Problem 4} Assuming that the existence of the Nash equilibrium
$\disp {w}_2$ was proved, then when ${w_1}$ varies in $L^2({\Sigma}_1)$, prove
that the solutions $\disp\left(v(y, t; {w_1}, {w}_2), v'(y, t; {w_1}, {w}_2)\right)$ of
the state equation \eqref{eq1.14}, evaluated at $t = T$, that is, $\disp\left(v(y, T;
{w_1}, {w}_2), v'(y, T; {w_1}, {w}_2)\right)$, generate a dense subset of $\disp
L^2(\Om) \times H^{-1}(\Om)$.

 \section{Nash Equilibrium}\label{sec3}
In this section, fixed any leader control $\disp \ w_1 \in L^2(\Sigma_1)$ we
determine the existence and uniqueness of solutions to the problem
\begin{equation} \label{eq3.10}
\begin{array}{l}
\displaystyle\inf_{w_2 \in L^2(\Sigma_2)}J_2(w_1,w_2),
\end{array}
\end{equation}
and a characterization of this solution in terms of an adjoint system.

In fact, this is a classical type problem in the control of distributed systems (cf.
J.-L. Lions \cite{L3}). It admits an unique solution
\begin{equation} \label{eq3.11}
\disp w_2 = \mathfrak{F}(w_1).
\end{equation}
The Euler - Lagrange equation for problem \eqref{eq3.10} is given by
\begin{equation} \label{eq3.21}
\int_{0}^{T}\int_{\Omega}{\alpha_{k}(t)}(v-v_2)\widehat{v}dy\,dt + \sigma\int_{\Sigma_2}w_2\widehat{w}_2d\Sigma = 0, \;\;\forall\, \widehat{w}_2 \in L^2(\Sigma_2),
\end{equation}
where $\widehat{v}$ is solution of the following system
\begin{equation} \label{eq3.22}
\begin{array}{l}
\disp \widehat v'' +L\widehat{v} = 0 \ \ \mbox{ in } \ \ Q,\\ [7pt]
\disp \widehat{v} = \left\{
\begin{array}{l}
0 \ \ \mbox{ on } \ \ \Sigma_1, \\
\widehat{w}_2 \ \ \mbox{ on } \ \ \Sigma_2, \\
0 \ \ \mbox{ on } \ \ \Sigma \backslash \left(\Sigma_1 \cup \Sigma_2\right),
\end{array}
\right. \\[13pt]
\disp \widehat{v}(y,0) = 0, \; \widehat{v'}(y,0) = 0, \;\; y \in \Om.
\end{array}
\end{equation}
In order to express \eqref{eq3.21} in a convenient form, we introduce the adjoint
state defined by
\begin{equation}\label{sac}
\begin{array}{l}
p'' + L^{\ast}\,p = \alpha_{k}(t)\left(v - v_2\right) \ \mbox{ in } \ \ Q, \\[5pt]\disp
p(T) = p'(T) = 0, \;\; y \in \Om, \;\;\; p = 0 \ \mbox{ on } \ \Sigma,
\end{array}
\end{equation}
where $L^{\ast}$ is the formal adjoint of the  operator $\disp L$.

Multiplyng (\ref{sac}) by $\widehat{v}$ and integrating by parts, we find
\begin{equation} \label{eq3.33}
\int_{0}^{T}\int_\Om \alpha_{k}(t)(v - v_2)\widehat{v}\,dy\,dt + \int_{\Sigma_2} \frac{1}{\alpha_k^2(t)}\,p_y\,\widehat{w}_2\,d\Sigma = 0,
\end{equation}
so that \eqref{eq3.21} becomes
\begin{equation}\label{ci}
\disp p_y= \sigma \alpha_k^2(t)\,w_2 \ \ \mbox{ on } \ \ \Sigma_2.
\end{equation}

We summarize these results in the following theorem:
\begin{thm}\label{teN}  For each $\disp w_1 \in L^2(\Sigma_1)$ there exists a unique Nash equilibrium $\disp w_2$ in the sense of \eqref{soncil}. Moreover, the follower $\disp w_2$ is given by
\begin{equation}\label{cseg}
\disp \disp w_2 = \mathfrak{F}(w_1)=\frac{1}{\sigma \alpha_{k}^2(t)}\,\;p_y\;\;\mbox{ on }\;\;\Sigma_2,
\end{equation}
where $\disp \{ v,p \}$ is the unique solution of (the optimality system)
\begin{equation} \label{eq3.37}
\begin{array}{l}
\disp v'' + Lv = 0 \ \mbox{ in } \;\; Q, \;\;\;   p'' + L^{\ast}\,p = \alpha_{k}(t)\left(v - v_2\right) \ \mbox{ in } \;\; Q,\\[5pt]
\disp v = \left\{
\begin{array}{l}
w_1 \ \mbox{ on } \ \Sigma_1,\\[5pt]
\disp \frac{1}{\sigma \alpha_{k}^2(t)}\;\,p_y \ \mbox{ on } \ \Sigma_2,\\[5pt]
0 \ \mbox{ on } \ \Sigma \backslash \Sigma_0,
\end{array}
\right.\\[7pt]
p = 0 \ \mbox{ on } \ \Sigma, \;\;\; v(0) = v'(0) = 0, \;\;\; p(T) = p'(T) = 0, \;\; y \in \Om.
\end{array}
\end{equation}
Of course, $\disp \{ v,p \}$ depends on $w_1$:
\begin{equation}\label{cdep}
\disp \{ v,p \} = \{ v(w_1),p(w_1)\}.
\end{equation}
\end{thm}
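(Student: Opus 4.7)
The plan is to recognize \eqref{eq3.10} as a strictly convex, coercive quadratic minimization problem on the Hilbert space $L^2(\Sigma_2)$, whose unique minimizer will then be characterized by introducing a suitable adjoint state.

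First I would establish existence and uniqueness of the Nash equilibrium. By Remark \ref{rsol}, for each $w_1\in L^2(\Sigma_1)$ the map $w_2\mapsto v(w_1,w_2)$ is affine and continuous from $L^2(\Sigma_2)$ into $C([0,T];L^2(\Om))$, so $J_2(w_1,\cdot)$ is a continuous, strictly convex, coercive quadratic on $L^2(\Sigma_2)$; strict convexity and coercivity are supplied by the Tikhonov term $\tfrac{\sigma}{2}\int_{\Sigma_2}w_2^2\,d\Sigma$ with $\sigma>0$. The direct method of the calculus of variations then produces a unique minimizer $w_2=\mathfrak{F}(w_1)$, which by \eqref{soncil} is precisely the Nash equilibrium.

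Next I would derive the optimality condition. Differentiating $J_2(w_1,\cdot)$ in a direction $\widehat{w}_2\in L^2(\Sigma_2)$ produces \eqref{eq3.21}, where $\widehat{v}$ is the Gateaux derivative of $v(w_1,\cdot)$ and therefore solves the homogeneous problem \eqref{eq3.22}. To eliminate $\widehat{v}$ I would introduce the adjoint $p$ defined by \eqref{sac}; since the source $\alpha_k(v-v_2)$ belongs to $L^2(Q)$, standard hidden-regularity results for the wave equation with homogeneous Dirichlet data yield $p\in C([0,T];H^{1}_{0}(\Om))\cap C^{1}([0,T];L^2(\Om))$ together with a trace bound giving $\alpha_k^{-2}\,p_y|_{\Sigma}\in L^2(\Sigma)$, so the boundary integral in \eqref{eq3.33} is well defined. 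Multiplying \eqref{sac} by $\widehat{v}$ and integrating by parts in $Q$, the terminal data $p(T)=p'(T)=0$, the zero initial data of $\widehat{v}$, and the vanishing of $p$ on $\Sigma$ kill every boundary contribution except the one on $\Sigma_2$, producing \eqref{eq3.33}. Subtracting \eqref{eq3.33} from \eqref{eq3.21} yields
\begin{equation*}
\int_{\Sigma_2}\left(\sigma\, w_2-\frac{1}{\alpha_k^2(t)}\,p_y\right)\widehat{w}_2\,d\Sigma = 0,\qquad\forall\,\widehat{w}_2\in L^2(\Sigma_2),
\end{equation*}
from which \eqref{cseg} follows, and inserting this expression into the boundary condition for $v$ on $\Sigma_2$ gives the coupled optimality system \eqref{eq3.37}. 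Uniqueness in \eqref{eq3.37} is then inherited from the uniqueness of $\mathfrak{F}(w_1)$ together with the linear well-posedness of the direct and adjoint wave problems.

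The main obstacle is justifying the integration by parts rigorously, since $v$ and $\widehat{v}$ are only transposition solutions and are not a priori in $H^1(Q)$. The standard remedy is a density argument: regularize the boundary datum $\widehat{w}_2$ so that $\widehat{v}$ becomes a strong solution for which Green's identity is immediate, and then pass to the $L^2$-limit using the hidden-regularity trace estimate on $\alpha_k^{-2}\,p_y|_{\Sigma}$ to control the boundary contribution. The coefficient hypotheses (H1)--(H2), inherited from the diffeomorphism $\zeta$, are precisely what guarantee this hidden regularity and the continuous dependence required for the passage to the limit.
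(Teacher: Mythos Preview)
Your proposal is correct and follows essentially the same route as the paper: existence and uniqueness of the minimizer by appeal to the strict convexity and coercivity of $J_2(w_1,\cdot)$ (the paper simply cites Lions \cite{L3}), followed by the Euler--Lagrange identity \eqref{eq3.21}, the introduction of the adjoint state \eqref{sac}, and the integration-by-parts computation leading to \eqref{eq3.33} and hence \eqref{ci}. Your discussion of hidden regularity and the density argument needed to justify Green's identity for transposition solutions actually supplies more technical detail than the paper itself, which performs the computation formally and leaves these points implicit.
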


\section{On the Approximate Controllability}\label{sec4}
Since we have proved the existence, uniqueness and characterization of the
follower $\disp w_2$, the leader $\disp w_1$  now wants  that the solutions $v$
and $v'$, evaluated at time $t=T$, to  be as close as possible to $\disp(v^0,
v^1)$. This will be possible if the system \eqref{eq3.37}  is approximately
controllable. We are looking for
\begin{equation} \label{inf1}
\begin{array}{l}
\displaystyle\inf\, \frac{1}{2\,}\,\int_{\Sigma_1} w_{1}^{2}\,d\Sigma,
\end{array}
\end{equation}
where $\disp w_1$ is subject to
\begin{equation} \label{subj1}
\begin{array}{l}
\disp\left(v(T;{w_1}), v'(T; {w_1})\right)\in B_{L^2(\Om)}(v^0,\rho_0) \times B_{H^{-1}(\Om)}(v^1,\rho_1),
\end{array}
\end{equation}
assuming that  $w_1$ exists, $\rho_0$ and  $\rho_1$ being positive numbers arbitrarily
small and \linebreak $\{v^0, v^1\} \in L^2(\Om) \times H^{-1}(\Om)$.

As in \cite{Cui2}, we assume that
\begin{equation}\label{hT}
T >  T_{k}^{\ast}
\end{equation}
and
\begin{equation}\label{hT10}
0 < k <  1 - \frac{1}{ \sqrt{e}}.
\end{equation}

Now as in the case \eqref{decomp 2.A} and using  Holmgren's Uniqueness
Theorem (cf. \cite{LH}; and see also \cite {Cui2} for additional discussions), the following approximate controllability result holds:
\begin{thm}\label{AC}  Assume that \eqref{hT} and \eqref{hT10} hold. Let us consider $\disp w_1 \in L^2(\Sigma_1)$ and $\disp w_2$ a Nash equilibrium in the sense \eqref{soncil}.
Then $\disp \left(v(T), v'(T)\right)=\left(v(., T, {w_1}, w_2), v'(., T, {w_1},
w_2)\right)$, where $\disp v$ solves the system \eqref{eq1.14}, generates a dense
subset of $\disp L^2(\Om)\times H^{-1}(\Om)$.
\end{thm}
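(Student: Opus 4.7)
The plan is to prove density by a Hahn--Banach/duality argument, combined with the setup of an adjoint system for the optimality system \eqref{eq3.37} and an application of Holmgren's uniqueness theorem in the equivalent cylindrical setting \eqref{eq1.14}. First, by Remark \ref{r1} and the fact that the target $v_2 \in L^2(Q)$ in the secondary functional $J_2$ contributes only a fixed affine offset to $\{v,p\}$, density of the reachable set $\{(v(T),v'(T))\}$ as $w_1$ varies is equivalent to density of the purely linear image obtained by setting $v_0=v_1=0$ and $v_2\equiv 0$. I would therefore reduce the problem to showing that the linear map $w_1\mapsto (v(T),v'(T))$, where $\{v,p\}$ solves the homogeneous version of \eqref{eq3.37}, has dense range in $L^2(\Om)\times H^{-1}(\Om)$.

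Next I would apply Hahn--Banach: the range is dense iff every pair $(f^0,f^1)\in L^2(\Om)\times H^1_0(\Om)$ satisfying
\begin{equation*}
\int_\Om f^0\, v'(T)\,dy \;-\; \langle f^1, v(T)\rangle_{H^1_0,H^{-1}} = 0 \quad \forall\, w_1\in L^2(\Sigma_1)
\end{equation*}
must vanish. To make this condition exploitable, I would introduce an adjoint coupled system $\{\varphi,\psi\}$ mirroring \eqref{eq3.37} with reversed time: $\varphi$ solves a backward wave-type equation with $L^*$ and terminal data $\varphi(T)=f^0$, $\varphi'(T)=-f^1$; $\psi$ solves a forward equation with $L$ and source $\alpha_k(t)\varphi$; both carry zero Dirichlet data on $\Sigma\setminus\Sigma_0$ and on $\Sigma_1$. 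The delicate point is the choice of boundary condition on $\Sigma_2$: it must be prescribed so that, when the optimality system is multiplied by $\{\varphi,\psi\}$ and integrated by parts, the boundary term produced by the follower's characterization $v|_{\Sigma_2}=p_y/(\sigma\alpha_k^2)$ cancels against the corresponding adjoint trace. The natural choice is $\varphi=-\psi_y/(\sigma\alpha_k^2)$ on $\Sigma_2$ (up to the weight $\beta_k(0,t)/\alpha_k(t)=1/\alpha_k^2(t)$).

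With this adjoint in place, a careful integration by parts in both $y$ and $t$ (using $v(0)=v'(0)=0$, $p(T)=p'(T)=0$, $\psi|_\Sigma=0$, and the coupling on $\Sigma_2$) yields an identity of the form
\begin{equation*}
\int_\Om f^0\, v'(T)\,dy - \langle f^1, v(T)\rangle \;=\; \int_{\Sigma_1} \frac{1}{\alpha_k^2(t)}\,\varphi_y\, w_1\, d\Sigma .
\end{equation*}
The annihilator hypothesis thus forces $\varphi_y=0$ on $\Sigma_1$; combined with $\varphi=0$ on $\Sigma_1$, and the fact that $\varphi$ solves a wave-type equation whose coefficients $\beta_k,\gamma_k$ meet (H1)--(H2), Holmgren's uniqueness theorem applies provided the time horizon and speed satisfy \eqref{hT} and \eqref{hT10} (exactly as used in \cite{Cui2,LH}). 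This gives $\varphi\equiv 0$ in $Q$. The coupled equation for $\psi$ then reduces to a homogeneous wave equation with zero boundary data and zero source, so $\psi\equiv 0$; evaluating at $t=T$ yields $f^0=0$, $f^1=0$, which is the desired density.

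The main obstacle, in my view, is \emph{not} the Holmgren step (which is routine once the equation is in cylindrical form) but the correct specification and well-posedness of the adjoint coupled system: the non-local boundary condition $\varphi=-\psi_y/(\sigma\alpha_k^2)$ on $\Sigma_2$ couples a backward problem to the trace of a forward problem, so existence and uniqueness for $\{\varphi,\psi\}$ in the transposition sense require some care (typically done by fixed-point on $\psi_y|_{\Sigma_2}$ together with the hidden regularity of solutions of the wave equation with Dirichlet boundary data, analogous to the regularity stated in Remark \ref{rsol}). Once this adjoint system is justified and the duality identity above is established, the rest of the argument is standard.
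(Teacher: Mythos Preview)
Your overall plan---reduce to the linear part, apply Hahn--Banach, introduce an adjoint coupled system, derive a duality identity yielding $\varphi_y=0$ on $\Sigma_1$, and conclude by Holmgren---is exactly the paper's strategy. However, the specific adjoint system you propose is set up with the couplings \emph{swapped}, and this is a genuine error.

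In the optimality system \eqref{eq3.37} the coupling appears in two places: as a \emph{source} $\alpha_k v$ in the $p$-equation, and as a \emph{boundary condition} $v|_{\Sigma_2}=\frac{1}{\sigma\alpha_k^2}p_y$ in the $v$-equation. By duality, the source in the $p$-equation must reappear as a source in the adjoint of the $v$-equation (that is, in the $\varphi$-equation), and the boundary coupling in the $v$-equation must reappear as a boundary condition for the adjoint of the $p$-equation (that is, for $\psi$). The correct adjoint is therefore
\[
\varphi''+L^*\varphi=\alpha_k(t)\psi,\quad \varphi=0 \text{ on }\Sigma,\qquad
\psi''+L\psi=0,\quad \psi=\frac{1}{\sigma\alpha_k^2}\varphi_y \text{ on }\Sigma_2,
\]
which is precisely what the paper uses. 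Your choice puts the source in $\psi$ and the nonhomogeneous boundary condition on $\varphi$; with $\varphi\neq 0$ on $\Sigma_2$, the integration by parts of $\int_Q(g''+Lg)\varphi$ produces an uncontrolled term involving $g_y|_{\Sigma_2}$, and the clean identity you write down does not follow.

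This also affects the order of the uniqueness argument. With the correct adjoint, $\varphi$ carries the source $\alpha_k\psi$, so Holmgren cannot be applied to $\varphi$ directly. The paper instead uses the hypothesis \eqref{decomp 2.A} (namely $\Sigma_1=\Sigma_2=\Sigma_0$): once $\varphi_y=0$ on $\Sigma_1=\Sigma_2$, the boundary data of $\psi$ vanish on all of $\Sigma$, so $\psi\equiv 0$ by well-posedness; only then does the $\varphi$-equation become homogeneous and Holmgren applies to give $\varphi\equiv 0$, hence $f^0=f^1=0$. Your proposal misses this step (and the role of \eqref{decomp 2.A}) because your incorrect adjoint happened to have a source-free $\varphi$. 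Fix the adjoint as above and reorder the conclusion accordingly; the remainder of your outline then matches the paper.
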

\begin{proof}
 We decompose the solution $\disp (v,p)$ of \eqref{eq3.37}
setting
\begin{equation} \label{eq3.39}
v = v_0 + g, \ \ \ \ \  p = p_0 + q,
\end{equation}
where $v_0$, $p_0$ is given by
\begin{equation} \label{eq3.40}
\begin{array}{l}
\disp v_0'' + L\,v_0 = 0 \ \mbox{ in } \ Q,\\[5pt]\disp
v_0 = \left\{
\begin{array}{l}
0 \ \mbox{ on } \ \Sigma_1,\\[5pt]
\disp \frac{1}{\sigma \alpha_{k}^2(t)}\,({p_0})_{y} \ \mbox{ on } \ \Sigma_2,\\[5pt]\disp
0 \ \mbox{ on } \ \Sigma \backslash \Sigma_0,
\end{array}
\right.\\[5pt]\disp
v_0(0) = v_0'(0) = 0, \;\; y\in \Om,
\end{array}
\end{equation}

\begin{equation} \label{eq3.41}
\begin{array}{l}
\disp p_0'' + L^{\ast}p_0 = \alpha_{k}(t) \left(v_0 - v_2\right) \ \mbox{ in } \ Q, \\[5pt]\disp
p_0 = 0 \ \mbox{ on } \ \Sigma, \;\;\;  p_0(T) = p_0'(T) = 0, \;\; y \in \Om,
\end{array}
\end{equation}
and $\disp\{g,q\}$ is given by
\begin{equation} \label{eq3.42}
\begin{array}{l}
g'' + L\,g = 0 \ \mbox{ in } \ Q,\\[5pt]\disp
g = \left\{
\begin{array}{l}
w_1 \ \mbox{ on } \ \Sigma_1,\\[5pt]
\disp \frac{1}{\sigma \alpha_{k}^2(t)}\,q_{y} \ \mbox{ on } \ \Sigma_2,\\[5pt]\disp
0 \ \mbox{on} \ \Sigma \backslash \Sigma_0,
\end{array}
\right.\\[5pt]\disp
g(0) = g'(0) = 0, \;\;y \in \Om,
\end{array}
\end{equation}

\begin{equation} \label{eq3.43}
\begin{array}{l}
\disp q'' + L^{\ast}q = \alpha_{k}(t) g \ \mbox{ in } \ Q, \\[5pt]\disp
q = 0 \ \mbox{ on } \ \Sigma, \;\;\; q(T) = q'(T) = 0, \;\; y \in \Om.
\end{array}
\end{equation}
We next set
\begin{equation} \label{eq3.44}
\begin{array}{ccll}
A \ : & \! L^2(\Sigma_1) & \! \longrightarrow & \! H^{-1}(\Om)  \times L^2(\Om)\\
& \! w_1  & \!\longmapsto & \! A\,w_1  =  \big\{ g'(T;w_1) + \delta g(T;w_1),\; -g(T;w_1) \big\},
\end{array}
\end{equation}

which defines
$$A \in \mathcal{L}\left( L^2(\Sigma_1); \;H^{-1}(\Om) \times L^2(\Om)\right),$$
where  $\delta$ is  a positive constant.

Using \eqref{eq3.39} and \eqref{eq3.44}, we can rewrite  \eqref{subj1} as
\begin{equation} \label{subj2}
\begin{array}{l}
\disp Aw_1\in \{ -v_0(T)+\delta g(T)+B_{H^{-1}(\Om)}(v^1,\rho_1),\;-v_0(T)+B_{L^2(\Om)}(v^0,\rho_0)\}.
\end{array}
\end{equation}
We will show that  $Aw_1$ generates a dense subspace of $H^{-1}(\Om) \times
L^2(\Om)$. For this, let \linebreak $\{ f^0,f^1 \} \in H_{0}^{1}(\Om) \times L^2(\Om)$ and consider the following systems (``adjoint states"):
\begin{equation} \label{eq3.45}
\begin{array}{l}
\varphi'' + L^{\ast}\,\varphi =\disp \alpha_{k}(t) \;\psi \ \mbox{ in } \ Q, \\[5pt]\disp
\varphi = 0 \ \mbox{ on } \ \Sigma, \;\;\;  \varphi(T) = f^0, \ \varphi'(T) = f^1, \;\; y \in \Om,
\end{array}
\end{equation}

\begin{equation} \label{eq3.46}
\begin{array}{l}
\psi'' + L\,\psi = 0 \ \mbox{ in } \ Q,\\[5pt]\disp
\psi = \left\{
\begin{array}{l}
0 \ \mbox{ on } \ \Sigma_1,\\[5pt]\disp
\disp \frac{1}{\sigma \alpha_{k}^2(t)}\,\varphi_{y} \ \mbox{ on } \ \Sigma_2,\\[5pt]\disp
0 \ \mbox{ on } \ \Sigma \backslash \Sigma_0,
\end{array}
\right.\\[5pt]\disp
\psi(0) = \psi'(0) = 0, \;\; y \in \Om.
\end{array}
\end{equation}

Multiplying $\eqref{eq3.46}_1$ by $q$, $\eqref{eq3.45}_1$ by $g$, where $q$, $g$
solve \eqref{eq3.43} and \eqref{eq3.42}, respectively, and integrating in $Q$ we
obtain
\begin{equation} \label{eq3.47}
\int_{0}^{T}\int_{\Om} \alpha_{k}(t)g\,\psi\,dy\,dt =- \frac{1}{\sigma}\int_{\Sigma_2} \frac{1}{\alpha_{k}^4(t)}\,q_{y}\, \varphi_{y} d\Sigma,
\end{equation}
and
\begin{equation} \label{eq3.49}
 \disp \langle g'(T),f^0 \rangle_{H^{-1}(\Om) \times  H_{0}^{1}(\Om)} + \delta \langle g(T), f^0 \rangle_{L^2(\Om) \times  H_{0}^1(\Om)}  - \big( g(T),f^1 \big) =-\int_{\Sigma_1}\frac{1}{\alpha_{k}^2(t)}\,\varphi_{y}\,w_1\,d\Sigma.
\end{equation}

Considering the left-hand side of the above equation as the inner product of $\disp \{g'(T)+
\delta g(T),-g(T)\}$ with $\{ f^0,f^1 \}$ in $ H^{-1}(\Om)  \times L^2(\Om) $ and $
H_{0}^{1}(\Om) \times L^2(\Om)$, we obtain
\begin{equation*}
\Big\langle \big\langle A\,w_1 , f \big\rangle \Big\rangle = - \int_{\Sigma_1}\frac{1}{\alpha_{k}^2(t)}\,\varphi_{y}\,w_1\,d\Sigma,
\end{equation*}
where $\Big\langle \big\langle . , . \big\rangle \Big\rangle$ represent the duality
pairing between $ H^{-1}(\Om) \times L^2(\Om) $ and $ H_{0}^{1}(\Om) \times
L^2(\Om) $. Therefore, if
$$\langle g'(T),f^0 \rangle_{H^{-1}(\Om) \times H_{0}^{1}(\Om)} + \delta \langle g(T), f^0 \rangle_{L^2(\Om) \times H_{0}^1(\Om)} - \big( g(T),f^1 \big) = 0,$$
for all $w_1 \in L^2(\Sigma_1)$, then
\begin{equation} \label{eq3.50}
\disp \varphi_{y}= 0 \ \ \mbox{ on } \ \ \Sigma_1.
\end{equation}
Hence, in case (\ref{decomp 2.A}),
\begin{equation} \label{eq3.51}
\psi = 0 \ \ \mbox{ on } \ \ \Sigma, \;\; \mbox{ so that } \psi\equiv 0.
\end{equation}
Therefore
\begin{equation} \label{eq3.54}
\begin{array}{l}
\disp \varphi'' + L^{\ast}\,\varphi = 0, \;\; \varphi = 0 \mbox{ on }  \Sigma,
\end{array}
\end{equation}
and satisfies \eqref{eq3.50}. Therefore, according to  Holmgren's Uniqueness
Theorem (cf. \cite{LH}; and see also \cite {Cui2} for additional discussions)
and if \eqref{hT} holds, then  $\disp \varphi \equiv 0$, so that $\disp f^0=0, f^1=0$, which completes the proof. \qed  \end{proof}

\section{Optimality System for the Leader}\label{sec5}
Thanks to the results obtained in  Section \ref{sec3}, we can take, for  each
$\disp w_1$, the Nash equilibrium $\disp w_2$ associated with the solution $\disp v$
of \eqref{eq1.14}. We will show the existence of a leader control $\disp w_1$
solution of the following problem:
\begin{equation} \label{eq3.7cil.1}
\disp \inf_{w_1\in \mathcal{U}_{ad}} J(w_1),
\end{equation}
where $\disp \mathcal{U}_{ad}$ is the set of admissible controls
\begin{equation}\label{admcon}
\disp \mathcal{U}_{ad}:=\{w_1\in L^2({\Sigma_1}); \; v \mbox{ solution of } \eqref{eq1.14} \mbox{ satisfying } \eqref{subj1}\}.
\end{equation}
For this, we will use  a duality argument due to Fenchel and Rockafellar \cite{R}
(cf. also \cite{Bre, EK}).

The following result holds:
\begin{thm} \label{teor3.6} Assume the hypotheses $\disp (H1) - (H2)$,
\eqref{decomp 2.A}, \eqref{hT} and \eqref{hT10}  are satisfied. Then for $\{f^0,f^1\}$
in $\disp H_0^1(\Om) \times L^2(\Om)$ we uniquely define $\{\varphi, \psi, v, p \}$
by
\begin{equation} \label{eq3.139}
\begin{array}{l}
\disp \varphi'' + L^* \varphi = \alpha_{k}(t)\psi \ \ \text{in} \ \ Q, \;\;\;  \psi'' + L \psi = 0 \ \ \text{in} \ \ Q, \\[3pt]\disp
\disp v'' + Lv = 0 \ \ \text{in} \ \ Q, \;\;\;  p'' + L^*p  = \alpha_{k}(t)(v - v_2) \ \ \text{in} \ \ Q, \;\;\;  \varphi = 0 \ \ \ \text{on} \ \ \ \Sigma,
\\[3pt]\disp
\disp \psi =
\left\{
\begin{array}{l}
\disp 0 \ \ \text{on} \ \ \Sigma_1, \\[3pt]\disp
\disp \frac{1}{\sigma \alpha_{k}^2(t)}\;\varphi_{y} \ \ \text{on} \ \ \Sigma_2, \\[3pt]\disp
\disp 0 \ \ \ \text{on} \ \ \ \Sigma \backslash \Sigma_0,\\[3pt]
\end{array}
\disp v =
\right.\\
\left\{
\begin{array}{l}
\disp - \frac {1}{\alpha_{k}^2(t)}\;\varphi_{y} \ \ \text{on} \ \ \Sigma_1,\\[3pt]\disp
\disp \frac{1}{\sigma \alpha_{k}^2(t)}\;p_{y} \ \ \text{on} \ \ \Sigma_2,\\[3pt]\disp
\disp 0 \ \ \ \text{on} \ \ \ \Sigma \backslash \Sigma_0,\\[3pt]
\end{array}
\right.\\
\disp p = 0 \ \ \ \text{on} \ \ \ \Sigma, \;\;\; \varphi(.,T) = f^0,\, \varphi'(.,T) = f^1 \ \ \text{in} \ \  \Om, \\[3pt]\disp
\disp v(0) = v'(0) = 0 \ \ \text{in} \ \   \Om, \;\;\;  p(T) = p'(T) = 0 \ \ \text{in} \ \  \Om.
\end{array}
\end{equation}
We uniquely define  $\{f^0,f^1\}$ as the solution of the variational inequality
\begin{equation} \label{eq3.140}
\begin{array}{l}
\disp \big\langle v'(T,f) - v^1, \widehat{f}^0 - f^0\big\rangle_{H^{-1}(\Om) \times  H_{0}^{1}(\Om)} - \big(v(T,f) - v^0, \widehat{f}^1 - f^1\big)  \\[10pt]
\disp + \rho_1\big(||\widehat{f}^0|| - ||f^0||\big) + \rho_0\big(|\widehat{f}^1| - |f^1|\big) \geq 0,\,\forall\, \widehat{f} \in H_{0}^{1}(\Om) \times L^2(\Om).
\end{array}
\end{equation}
Then the optimal leader is given by
\begin{equation*}
w_1 = -\frac{1}{\alpha_{k}^2(t)}\;\varphi_{y} \ \ \text{on} \ \ \Sigma_1,
\end{equation*}
where $\varphi$ corresponds to the solution of \eqref{eq3.139}.
\end{thm}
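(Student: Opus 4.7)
The plan is to run the Fenchel--Rockafellar duality argument announced before the statement, with Theorem \ref{AC} as the essential ingredient. First I would observe that Theorem \ref{AC} gives density of the reachable set, hence for any $\rho_0,\rho_1>0$ the admissible set $\mathcal{U}_{ad}$ is non-empty; being the preimage under the continuous affine map $w_1\mapsto(v(T;w_1),v'(T;w_1))$ of a closed convex target, it is also closed and convex in $L^2(\Sigma_1)$. Since $J$ is strictly convex, continuous and coercive on $L^2(\Sigma_1)$, a unique optimal leader $w_1^{*}$ exists; it remains to characterise it.

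Next I would set up the dual problem. Using the splitting $v=v_0+g$, $p=p_0+q$ of \eqref{eq3.39}--\eqref{eq3.43}, the pair $(v_0,p_0)$ does not depend on $w_1$, while $(g,q)$ depends linearly on $w_1$ through the operator $A$ of \eqref{eq3.44}. The constraint \eqref{subj1} rewrites as $Aw_1\in\mathcal{B}$ for some closed convex ball $\mathcal{B}\subset H^{-1}(\Om)\times L^2(\Om)$ centred at data built from $v^0,v^1,v_0(T)$ and $v_0'(T)$, and the primal problem becomes
$$\inf_{w_1\in L^2(\Sigma_1)}\Big\{\tfrac{1}{2}\|w_1\|^2_{L^2(\Sigma_1)}+I_{\mathcal{B}}(Aw_1)\Big\},$$
where $I_{\mathcal{B}}$ is the indicator of $\mathcal{B}$. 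The Fenchel--Rockafellar dual naturally lives in the predual $H_0^1(\Om)\times L^2(\Om)$, which is exactly where the datum $(f^0,f^1)$ of \eqref{eq3.139} sits.

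To identify $A^{*}$ I would multiply the equations of \eqref{eq3.42}--\eqref{eq3.43} by the adjoint pair $(\varphi,\psi)$ defined by \eqref{eq3.45}--\eqref{eq3.46} with terminal data $(f^0,f^1)$, and integrate by parts exactly as in \eqref{eq3.47}--\eqref{eq3.49}. The coupling through $\Sigma_2$ and all volume terms cancel and one obtains
$$\big\langle\big\langle Aw_1,(f^0,f^1)\big\rangle\big\rangle=-\int_{\Sigma_1}\frac{1}{\alpha_{k}^{2}(t)}\,\varphi_y\,w_1\,d\Sigma,$$
so $A^{*}(f^0,f^1)=-\alpha_{k}^{-2}(t)\,\varphi_y\big|_{\Sigma_1}$. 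Up to an additive constant, the dual functional then takes the form
$$\mathcal{J}^{*}(f^0,f^1)=\tfrac{1}{2}\int_{\Sigma_1}\frac{1}{\alpha_{k}^{4}(t)}|\varphi_y|^2\,d\Sigma+\rho_1\|f^0\|_{H_0^1(\Om)}+\rho_0\|f^1\|_{L^2(\Om)}+\ell(f^0,f^1),$$
where $\ell$ is a bounded linear form encoding the data $v^0,v^1,v_0(T),v_0'(T)$. The subdifferential inclusion $0\in\partial\mathcal{J}^{*}(f^0,f^1)$, together with the standard description of the subgradients of $\|\cdot\|_{H_0^1}$ and $\|\cdot\|_{L^2}$, reproduces exactly the variational inequality \eqref{eq3.140}, and the primal-dual relation $w_1^{*}=-A^{*}(f^0,f^1)$ yields the representation $w_1^{*}=-\alpha_{k}^{-2}(t)\,\varphi_y$ on $\Sigma_1$ stated in the theorem.

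The main obstacle is the coercivity of $\mathcal{J}^{*}$ on $H_0^1(\Om)\times L^2(\Om)$: its quadratic part only controls $\varphi_y$ on the lateral piece $\Sigma_1$, so a priori it could be degenerate. This is precisely where Theorem \ref{AC}, together with the standing assumptions \eqref{hT}--\eqref{hT10}, enters the picture: the Holmgren-type unique continuation used there shows that $\varphi_y\equiv 0$ on $\Sigma_1$ forces $\varphi\equiv 0$, and hence $(f^0,f^1)=0$. Combined with the penalisations $\rho_0,\rho_1>0$, this is enough to run the classical J.-L.~Lions coercivity argument and to conclude that $\mathcal{J}^{*}(f^0,f^1)/\|(f^0,f^1)\|\to +\infty$ as $\|(f^0,f^1)\|\to\infty$. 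Strict convexity of $\mathcal{J}^{*}$ then yields existence and uniqueness of the minimising pair $(f^0,f^1)$, completing the proof of the theorem.
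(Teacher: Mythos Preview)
Your argument is essentially the paper's own: rewrite the leader's constrained problem as $\inf\big[F_1(w_1)+F_2(Aw_1)\big]$ with $F_1$ quadratic and $F_2$ the indicator of the target balls, apply Fenchel--Rockafellar duality, compute $A^{*}$ through the adjoint pair \eqref{eq3.45}--\eqref{eq3.46}, and read off both the dual minimisation over $(f^0,f^1)\in H_0^1(\Om)\times L^2(\Om)$ and the representation of the optimal leader. The paper's proof is in fact terser than your sketch---it writes down the dual functional and simply remarks that the optimality system can be derived from either the primal or the dual side, without spelling out the coercivity argument that you (correctly) supply via the unique continuation of Theorem~\ref{AC}; note only the small slip that the extremality relation is $w_1^{*}=A^{*}(f^0,f^1)$, not $-A^{*}$, which together with $A^{*}(f^0,f^1)=-\alpha_k^{-2}(t)\varphi_y$ already yields the stated sign.
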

\begin{proof}
We introduce two convex proper functions as follows, firstly
\begin{equation}\label{eq3.119}
\begin{array}{l}
\disp F_1 : L^2(\Sigma_1) \longrightarrow \mathbb R \cup \{\infty\},\\[5pt]
\disp F_1(w_1) = \frac{1}{2} \int_{\Sigma_1}w_{1}^{2}\,d\Sigma
\end{array}
\end{equation}
the second one
\begin{equation*}
F_2 : H^{-1}(\Omega) \times L^2(\Omega) \longrightarrow \mathbb R \cup \{\infty\}
\end{equation*}
given by
\begin{align} \label{eq3.120}
\nonumber F_2(Aw_1) &= F_2\big(\{g'(T,w_1) + \delta g(T,w_1),-g(T,w_1)\}\big) =\\
& = \left\{
\begin{array}{l}
0, \text{ if }
\left\{
\begin{array}{l}
g'(T) + \delta g(T) \in v^1 - v_0'(T) + \delta g(T) + \rho_1B_{H^{-1}(\Om)},\\
-g(T) \in -v^0 + v_0(T,w_1) - \rho_0B_{L^2(\Om)},
\end{array}
\right.\\
+ \infty, \text{ otherwise}.
\end{array}
\right.
\end{align}
With these notations problems \eqref{inf1}--\eqref{subj1} become equivalent to
\begin{equation} \label{eq3.122}
\begin{array}{l}
\disp \inf_{w_1 \in L^2(\Sigma_1)}\big[F_1(w_1) + F_2(Aw_1)\big]
\end{array}
\end{equation}
provided we prove that the range of $\disp A$ is dense in $\disp H^{-1}(\Om)
\times L^2(\Om)$, under conditions \eqref{hT} and  \eqref{hT10}.

By the Duality Theorem of Fenchel and Rockafellar \cite{R} (see also \cite{Bre, EK}), we have
\begin{equation} \label{eq3.124}
\begin{array}{l}
\inf_{w_1 \in L^2(\Sigma_1)}[F_1(w_1) + F_2(Aw_1)]\\[5pt]\disp  = -\inf_{(\widehat{f}^0,\widehat{f}^1) \in H_{0}^{1}(\Om) \times L^2(\Om)} [F_{1}^{*}\big(A^*\{\widehat{f}^0,\widehat{f}^1\}\big) + F_{2}^{*}\{-\widehat{f}^0, -\widehat{f}^1\}],
\end{array}
\end{equation}
where $\disp F_i^*$ is the conjugate function of $\disp F_i  (i=1,2)$ and $\disp A^*$ the adjoint of $\disp A$.

We have
\begin{equation} \label{eq3.121}
\begin{array}{ccccc}
A^* \ : & \! H_{0}^{1}(\Omega) \times L^2(\Omega) & \! \longrightarrow & \! L^2(\Sigma_1) \\
& \! (f^0,f^1) & \! \longmapsto & \! A^*f = & \! -\dfrac{1}{\alpha_{k}^2(t)}\,\varphi_{y},
\end{array}
\end{equation}
where $\varphi$ is given in \eqref{eq3.45}.
We see easily that
\begin{equation} \label{eq3.125}
F_{1}^{*}(w_1) = F_1(w_1)
\end{equation}
and
\begin{equation}\label{eq3.125.2}
\begin{array}{l}
\disp  F_{2}^{*}(\{\widehat{f}^0,\widehat{f}^1 \})  = \big( v_0(T) - v^0 ,\widehat{f}^1\big) + \langle v^1 - v_0'(T) + \delta g(T), \widehat{f}^0\rangle_{H^{-1}(\Omega) \times H_{0}^{1}(\Omega)}\\ [10pt]
\disp + \rho_1||\widehat{f}^0|| + \rho_0|\widehat{f}^1|.
\end{array}
\end{equation}
Therefore the (opposite of) right-hand side of (\ref{eq3.124}) is given by
\begin{align} \label{eq3.127}
\disp
& - \inf_{\widehat{f} \in H_{0}^{1}(\Omega) \times L^2(\Omega)} \bigg\{\frac{1}{2}\int_{\Sigma_1}\left(\frac{1}{\alpha_{k}^2(t)}\right)^2 \widehat{\varphi}_{y}^2d\;\Sigma  + \big( v^0 - v_0(T) ,\widehat{f}^1\big) \\
\nonumber & - \langle v^1 - v_0'(T) + \delta g(T), \widehat{f}^0\rangle_{H^{-1}(\Omega) \times H_{0}^{1}(\Omega)} + \rho_1||\widehat{f}^0|| + \rho_0|\widehat{f}^1|\bigg\}.
\end{align}
This is the dual problem of \eqref{inf1}, \eqref{subj1}.

We have now two ways to derive the optimality system for the leader control,
starting from the primal or from the dual problem. \qed
\end{proof}

\section{Conclusions}\label{sec6}

In this paper, we have investigated the question of hierarchical control for the one-dimensional wave equation employing the Stackelberg strategy in the case of time dependent domains.

The main achievements of this paper are the existence and uniqueness of Nash equilibrium, the approximate controllability  with respect to the leader control, and the optimality system for the leader control.

As a future work we are looking for improvements and generalizations of these results to other models. To close this section, we make some comments and briefly discuss some possible extensions of our results and also indicate open issues on the subject.

\textbf{$\bullet$} It seems natural to expect that the controllability holds when the speed of the moving endpoint is positive and less than $1$. However,
we did not have success in extending the approach developed in Theorem~\ref{AC} for this case.

\textbf{$\bullet$} In the problem present in Section 2 the domain grows linearly in time (function alpha).  It would be quite interesting to study the controllability for  \eqref{eq1.3} in the non-cylindrical domain  $\widehat{Q}$ when the speed of the moving endpoint is greater than  $1.$ However, it seems very difficult and remains to be done. For interested readers on this subject, we cite for instance \cite {Cui2},\cite {Cui1}, and \cite {Cui}.

\begin{acknowledgements}
\leading{12pt} The author wants to express his gratitude to the anonymous reviewers for their questions and commentaries;
they were very helpful in improving this article.
\end{acknowledgements}

\end{document}